\tikzstyle{block} = [draw, fill=gray!20, rectangle, 
\tikzstyle{sum} = [draw, fill=gray!20, circle, node distance=1.5cm]
\tikzstyle{input} = [coordinate]
\tikzstyle{output} = [coordinate]
\tikzstyle{pinstyle} = [pin edge={to-,thin,black}]
\newtheorem{theorem}{Theorem}
\newtheorem{corollary}[theorem]{Corollary}
\newtheorem{lemma}[theorem]{Lemma}
\newtheorem{example}[theorem]{Example}
\newtheorem{proposition}[theorem]{Proposition}
\newtheorem{definition}[theorem]{Definition}
\DeclareMathOperator{\rank}{rank}
\DeclareMathOperator{\im}{im}
\DeclareMathOperator{\spec}{spec}
\newcommand{\set}[2]{\left\{#1 \mid #2\right\}}
\def\BibTeX{{\rm B\kern-.05em{\sc i\kern-.025em b}\kern-.08em
    T\kern-.1667em\lower.7ex\hbox{E}\kern-.125emX}}
\begin{document}
\title{A New Perspective on Willems' Fundamental Lemma: Universality of Persistently Exciting Inputs
}
\author{Amir Shakouri, Henk J. van Waarde, M. Kanat Camlibel
\thanks{The authors are with the Bernoulli Institute for Mathematics, Computer Science and Artificial Intelligence, University of Groningen, The Netherlands. Henk van Waarde acknowledges financial support by the Dutch Research Council under the NWO Talent Programme Veni Agreement (VI.Veni.22.335). Email:
        {\tt\small a.shakouri@rug.nl}, {\tt\small h.j.van.waarde@rug.nl}, {\tt\small m.k.camlibel@rug.nl}.}%
}

\maketitle
\thispagestyle{empty}

\begin{abstract}
In this letter, we provide new insight into Willems et al.'s fundamental lemma by studying the concept of universal inputs. An input is called \emph{universal} if, when applied to \emph{any} controllable system, it leads to input-output data that parametrizes all finite trajectories of the system. By the fundamental lemma, inputs that are persistently exciting of sufficiently high order are universal. The main contribution of this work is to prove the converse. Therefore, universality and persistency of excitation are equivalent.
\end{abstract}

\begin{IEEEkeywords}
Fundamental lemma, persistency of excitation, universal input, experiment design.
\end{IEEEkeywords}

\section{Introduction}

Willems et al.'s fundamental lemma \cite{willems2005note} states that all finite-length input-output trajectories of a controllable linear system can be parametrized using a single input-output trajectory generated by a persistently exciting sequence of inputs. This result plays an important role in system identification using subspace methods \cite{VERHAEGEN01111992}, where sufficiently rich input-output data can be used to identify the state-space system's parameters up to a similarity transformation. In addition, one of the most popular approaches to modern data-driven control is to employ the trajectory parametrization implied by the fundamental lemma in order to bypass explicit system identification. For instance, trajectory simulation \cite{markovsky2008data}, design of stabilizing feedback gains \cite{de2019formulas}, and construction of predictive controllers \cite{coulson2019data,berberich2020data} can be performed directly on the basis of input-output data. We refer the reader to \cite{van2020data,van2023informativity} for an alternative approach that avoids the use of persistently exciting input data. 

The fundamental lemma has been initially proven in \cite{willems2005note} within the context of behavioral theory. Other proofs of this lemma using state-space formulation have been presented in \cite{van2020willems,coulson2022quantitative,berberich2023quantitative}. Various extensions of this lemma have been studied in recent years. The extension of the fundamental lemma to multiple datasets has been studied in \cite{van2020willems} and to uncontrollable systems in \cite{yu2021controllability}. A robust version of this lemma for the case where the data are subject to noise is studied in \cite{coulson2022quantitative}. A counterpart of the fundamental lemma for linear parameter varying systems has been provided in \cite{verhoek2021fundamental}, and a version of this lemma in the frequency domain has been presented in \cite{meijer2023frequency}. Extensions of this lemma to stochastic systems \cite{pan2022stochastic}, 2D systems \cite{rapisarda2024input}, and several classes of nonlinear systems \cite{alsalti2023data,markovsky2022data,shang2024willems} are also among the recent works. In addition, counterparts of the fundamental lemma for continuous-time systems have been studied in \cite{rapisarda2023fundamental,lopez2024input}. 

The persistency of excitation condition imposes a lower bound on the required number of data samples. As shown for the first time in \cite{van2021beyond}, for a \emph{single} controllable system, one can improve the sample efficiency by generating an online experiment of shorter length that still enables the parametrization of all system trajectories. Moreover, \cite{camlibel2025shortest} constructed the shortest possible experiment for linear system identification, based on the necessary and sufficient conditions in \cite{camlibel2024beyond}. Nonetheless, the persistency of excitation condition guarantees a stronger property than the online input design. In fact, a persistently exciting input guarantees sufficiently rich data when applied to \emph{any} controllable system. In this sense, persistently exciting inputs are \emph{universal} as they work for the whole class of controllable systems. This is in contrast to online experiment design methods in \cite{van2021beyond,camlibel2025shortest} that are tailored to a \emph{specific} controllable system, because the online input design is guided by the outputs of the data-generating system. The existing literature lacks a full characterization of universal inputs. 

In this work, we investigate whether there exist other universal inputs than persistently exciting ones. In particular, we answer the following question: 
\begin{center}
    \emph{What are necessary and sufficient conditions \\ for an input to be universal?}
\end{center}
It turns out that persistency of excitation of a certain order is necessary and sufficient for the universality of the input. The sufficiency obviously follows from the fundamental lemma. The main contribution of this letter is to prove the necessity. To this end, given an input that is \emph{not} persistently exciting, we show how one can construct a controllable system and an initial condition such that the resulting input-output data \emph{do not} parametrize all finite-length trajectories of the system. The presented results provide insight into Willems et al.'s fundamental lemma by showing that persistency of excitation is not a conservative condition to guarantee universality. In the particular case of single-input systems, we further prove that if the input is not persistently exciting of a sufficiently high order, then \emph{almost any} controllable system can generate data that are not suitable for parametrization of the system trajectories. This also makes a connection to the recent work \cite{markovsky2023persistency}, where a similar observation was made.

The remainder of this letter is organized as follows. Section~\ref{sec:II} includes the problem statement. Section~\ref{sec:III} presents the main result of the letter. Section~\ref{sec:IV} is devoted to discussions on the main results. In Section~\ref{sec:V} we study numerical examples. Finally, Section~\ref{sec:VI} concludes the paper. 

\subsubsection*{Notation}

Let $\mathbb{Z}$, $\mathbb{Z}_+$ and $\mathbb{N}$ denote the sets of integers, non-negative integers, and positive integers, respectively. For $a,b \in \mathbb{Z}$ we define the integer interval \linebreak $[a,b] \coloneqq \set{c \in \mathbb{Z}}{a \leq c \leq b}$. Note that $[a,b]=\varnothing$ if $a>b$. Given $v:\mathbb{Z}_+\rightarrow\mathbb{R}^n$ and $T\in\mathbb{N}$, we define
\begin{equation}
v_{[0,T-1]}\coloneqq\begin{bmatrix}
v(0)^\top & v(1)^\top & \cdots & v(T-1)^\top
\end{bmatrix}^\top.
\end{equation}
The \emph{Hankel matrix} of $v_{[0,T-1]}$ of depth $k\in[1,T]$ is denoted by
\begin{equation}
\mathcal{H}_k(v_{[0,T-1]})\coloneqq \begin{bmatrix}
v(0) & v(1) & \cdots & v(T-k) \\
v(1) & v(2) & \cdots & v(T-k+1) \\
\vdots & \vdots & \ddots & \vdots \\
v(k-1) & v(k) & \cdots & v(T-1)
\end{bmatrix}.
\end{equation}

We say that $v_{[0,T-1]}$ is \emph{persistently exciting of order $k$} if $\mathcal{H}_k(v_{[0,T-1]})$ has full row rank\footnote{This definition of persistency of excitation is different from the one that typically appears in the adaptive control literature, defined for signals on an infinite time horizon (see, e.g., \cite[Def. 3.6.1]{mareels1996adaptive}).}.

We denote the kernel of matrix $M\in\mathbb{R}^{n\times m}$ by $\ker M\coloneqq \set{x\in\mathbb{R}^m}{Mx=0}$, and its image by \mbox{$\im M\coloneqq \set{Mx}{x\in\mathbb{R}^m}$}. The set of eigenvalues of a matrix $M\in\mathbb{R}^{n\times n}$ is denoted by $\spec M$. 

We say a matrix $M\in\mathbb{R}^{n\times n}$ is \emph{cyclic} if there exists $\zeta\in\mathbb{R}^n$ such that the pair $(M,\zeta)$ is controllable (see \cite[p. 67]{trentelmancontrol} and \cite[Def. 7.7.4]{bernstein2018scalar}).   

\section{Problem Statement}
\label{sec:II}

Let $n,m,p\in\mathbb{N}$. Consider the input-state-output system
\begin{subequations}
\label{eq:1}
\begin{align}
\label{eq:1(a)}
x(t+1)&=Ax(t)+Bu(t), \\
\label{eq:1(b)}
y(t)&=Cx(t)+Du(t),
\end{align}
\end{subequations}
where $u(t)\in\mathbb{R}^m$ is the input, $x(t)\in\mathbb{R}^n$ is the state, and $y(t)\in\mathbb{R}^p$ is the output. We identify system \eqref{eq:1} with the quadruple of matrices $(A,B,C,D)\in\mathcal{M}$, where
\begin{equation*}
\mathcal{M}\coloneqq \mathbb{R}^{n\times n}\times\mathbb{R}^{n\times m}\times\mathbb{R}^{p\times n}\times\mathbb{R}^{p\times m}.
\end{equation*}
We also define
\begin{equation}
\mathcal{M}_\text{cont}\coloneqq\set{(A,B,C,D)\in\mathcal{M}}{(A,B)\text{ is controllable}}.
\end{equation}
For an $(A,B,C,D)\in\mathcal{M}$, we denote its \emph{input-output behavior} by
\begin{equation}
\begin{split}
\mathfrak{B}(A,B,C,D)
\!\coloneqq\!\{(u,y)\!:\!\mathbb{Z}_+\!\rightarrow\! \mathbb{R}^m\!\times\!\mathbb{R}^p\mid\exists x:\mathbb{Z}_+\!\rightarrow\!\mathbb{R}^n\\
\text{ such that }\eqref{eq:1(a)}\text{ and }\eqref{eq:1(b)}\text{ hold}\},
\end{split}
\end{equation}
and its \emph{$k$--restricted input-output behavior} by
\begin{equation}
\mathfrak{B}_k(A,B,C,D)\!\coloneqq\!\set{\!\begin{bmatrix}
u_{[0,k-1]} \\ y_{[0,k-1]}
\end{bmatrix}}{(u,y)\in\mathfrak{B}(A,B,C,D)\!}\!.
\end{equation}
In addition, we define the \emph{$k$--restricted input-state behavior} of \eqref{eq:1(a)} as
\begin{equation}
\mathfrak{B}_k(A,B)\coloneqq\mathfrak{B}_k(A,B,I_n,0).
\end{equation}

For any trajectory $\begin{bmatrix}
u_{[0,T-1]} \\ y_{[0,T-1]}
\end{bmatrix}\in\mathfrak{B}_T(A,B,C,D)$, it is evident that
\begin{equation}
\label{eq:th:1-b1_before}
\im \begin{bmatrix}
\mathcal{H}_k(u_{[0,T-1]}) \\
\mathcal{H}_k(y_{[0,T-1]})\end{bmatrix}\subseteq\mathfrak{B}_k(A,B,C,D)
\end{equation}
for all $k\in[1,T]$. The celebrated fundamental lemma states that, under suitable conditions, \eqref{eq:th:1-b1_before} holds with equality (see \mbox{\cite[Thm. 1]{willems2005note}} and \cite[Thm. 1]{van2020willems}).

\begin{proposition}[Willems et al.'s fundamental lemma]
\label{prop:1}
Let $T\in\mathbb{N}$, $L\in[1,T]$, and $(A,B,C,D)\in\mathcal{M}_\text{cont}$. If $u_{[0,T-1]}$ is persistently exciting of order $n+L$, then the following statements hold:
\begin{enumerate}[label=(\alph*),ref=\ref{prop:1}(\alph*)]
    \item\label{prop:1(a)} We have
    \begin{equation}
    \label{eq:th:1-a1}
    \rank \begin{bmatrix}
\mathcal{H}_L(u_{[0,T-1]}) \\
\mathcal{H}_1(x_{[0,T-L]}) 
\end{bmatrix}=n+Lm
    \end{equation}
    for all $x_{[0,T-L]}$ satisfying
    \begin{equation}
    \label{eq:th:1-a2}
    \begin{bmatrix}
u_{[0,T-L]} \\ x_{[0,T-L]}
\end{bmatrix}\in\mathfrak{B}_{T-L+1}(A,B).
    \end{equation}
\item\label{prop:1(b)} We have
\begin{equation}
\label{eq:th:1-b1}
\mathfrak{B}_L(A,B,C,D)=\im \begin{bmatrix}
\mathcal{H}_L(u_{[0,T-1]}) \\
\mathcal{H}_L(y_{[0,T-1]}) 
\end{bmatrix}
\end{equation}
for all $y_{[0,T-1]}$ satisfying
\begin{equation}
\label{eq:th:1-b2}
\begin{bmatrix}
u_{[0,T-1]} \\ y_{[0,T-1]}
\end{bmatrix}\in\mathfrak{B}_T(A,B,C,D).
\end{equation}
\end{enumerate}
\end{proposition}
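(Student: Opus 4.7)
\emph{Part (a).} My approach is by contradiction. Suppose the claimed rank condition fails: there exist $\alpha = (\alpha_0^\top, \ldots, \alpha_{L-1}^\top)^\top \in \mathbb{R}^{Lm}$ and $\beta \in \mathbb{R}^n$, not both zero, with
\[
\sum_{i=0}^{L-1} \alpha_i^\top u(j+i) + \beta^\top x(j) = 0 \quad \text{for every } j \in [0, T-L].
\]
I would eliminate the state by shifting time and invoking Cayley--Hamilton: for each $k \in [0, n]$, substituting $x(j+k) = A^k x(j) + [A^{k-1}B \,\cdots\, B]\, u_{[j, j+k-1]}$ into the time-shifted relation yields
\[
\alpha^\top u_{[j+k, j+k+L-1]} + ((A^\top)^k \beta)^\top x(j) + \sum_{s=0}^{k-1} (B^\top (A^\top)^{k-1-s} \beta)^\top u(j+s) = 0.
\]
Taking the linear combination of these equations with the coefficients of the characteristic polynomial of $A$ collapses the $x(j)$-terms, leaving a linear combination of $u(j), u(j+1), \ldots, u(j+n+L-1)$ that vanishes for every $j \in [0, T-n-L]$. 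Persistency of excitation of order $n+L$ then forces every coefficient of this input combination to be zero.

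I would then read off these coefficients in decreasing order of the offset $t \in \{0, \ldots, n+L-1\}$ of $u(j+t)$. In the range $t \in [n, n+L-1]$ the equations take a triangular form: each one introduces exactly one new $\alpha_i$ with unit coefficient, so $\alpha_{L-1}, \alpha_{L-2}, \ldots, \alpha_0$ vanish successively. In the remaining range $t \in [0, n-1]$, the $\alpha$-part has already been eliminated and the constraints reduce to $B^\top (A^\top)^k \beta = 0$ for $k = 0, \ldots, n-1$, i.e.\ $\beta$ annihilates the controllability matrix of $(A, B)$; controllability then forces $\beta = 0$, contradicting non-triviality of $(\alpha, \beta)$. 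The main obstacle is the bookkeeping in this triangular elimination: carefully tracking which $\alpha_i$ and which $B^\top (A^\top)^k \beta$ contribute to the coefficient of each $u(j+t)$, and verifying that the two cascades decouple cleanly.

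\emph{Part (b).} Once (a) is in hand, (b) follows easily. The inclusion $\supseteq$ is immediate since every column of the right-hand side Hankel matrix is a length-$L$ trajectory. For $\subseteq$, pick $\begin{bmatrix} \bar u \\ \bar y \end{bmatrix} \in \mathfrak{B}_L(A,B,C,D)$ arising from some initial state $\bar x \in \mathbb{R}^n$. By part (a), the map $g \mapsto \begin{bmatrix} \mathcal{H}_L(u_{[0,T-1]}) \\ \mathcal{H}_1(x_{[0,T-L]}) \end{bmatrix} g$ is surjective onto $\mathbb{R}^{Lm+n}$, so I can choose $g$ with $\mathcal{H}_L(u_{[0,T-1]}) g = \bar u$ and $\mathcal{H}_1(x_{[0,T-L]}) g = \bar x$. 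Then, using the factorization $\mathcal{H}_L(y_{[0,T-1]}) = \mathcal{O}_L\, \mathcal{H}_1(x_{[0,T-L]}) + \mathcal{T}_L\, \mathcal{H}_L(u_{[0,T-1]})$, where $\mathcal{O}_L$ is the extended observability matrix and $\mathcal{T}_L$ the block Toeplitz matrix of Markov parameters of $(A,B,C,D)$, I conclude $\mathcal{H}_L(y_{[0,T-1]}) g = \mathcal{O}_L \bar x + \mathcal{T}_L \bar u = \bar y$, completing the inclusion.
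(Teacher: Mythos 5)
Your proof is correct, but note that the paper does not actually prove Proposition~\ref{prop:1}: it is stated as a known result and attributed to the literature (Willems et al.\ and the state-space proof of van Waarde et al.), so there is no in-paper argument to compare against. What you have written is essentially a self-contained reconstruction of the standard state-space proof from the cited reference: take a nonzero left-kernel vector $(\alpha,\beta)$ of the stacked input/state Hankel matrix, shift in time, eliminate the state via Cayley--Hamilton to obtain a left-kernel vector $\gamma$ of $\mathcal{H}_{n+L}(u_{[0,T-1]})$, invoke persistency of excitation to get $\gamma=0$, and then unwind the triangular structure of $\gamma$ to force $\alpha=0$ and subsequently $\beta^\top A^k B=0$ for $k\in[0,n-1]$, whence $\beta=0$ by controllability. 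I checked the bookkeeping you flag as the main obstacle: for offsets $t\in[n,n+L-1]$ the coefficient of $u(j+t)$ is $\alpha_{t-n}+\sum_{i>t-n}c_{t-i}\alpha_i$ with $c_n=1$, so the $\alpha_i$ do vanish in descending order; and for $t\in[0,n-1]$ the residual coefficient is $\sum_{k=t+1}^{n}c_k\,\beta^\top A^{k-1-t}B$, which requires a second descending cascade (starting from $t=n-1$, where it equals $\beta^\top B$) rather than yielding $\beta^\top A^kB=0$ outright --- you allude to this with ``the two cascades decouple,'' and indeed they do. Part (b) is routine given (a), using $\im$-inclusion one way and the factorization $\mathcal{H}_L(y)=\mathcal{O}_L\mathcal{H}_1(x)+\mathcal{T}_L\mathcal{H}_L(u)$ the other way, exactly as the fundamental lemma is usually derived from the input/state rank condition. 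No gaps.
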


Persistency of excitation of order $n+L$ requires a sufficiently long trajectory, namely
\begin{equation}
T\geq (n+L)(m+1)-1.
\end{equation}
Based on Proposition~\ref{prop:1}, for a \emph{single} controllable system, persistency of excitation of order $n+L$ is a sufficient condition to determine the $L$--restricted behavior of the system from data, via \eqref{eq:th:1-b1}. However, the following example shows that this condition is not necessary. 
\begin{example}
\label{ex:1}
Consider the system given by
\begin{equation}
A=\begin{bmatrix}
0 & 1 \\
0 & 0
\end{bmatrix},\ B=\begin{bmatrix}
0 \\ 1
\end{bmatrix},\ C=\begin{bmatrix}
1 & 0
\end{bmatrix},\ \text{and}\ D=0.
\end{equation}
Let $x(0)=\begin{bmatrix}
x_1(0) & x_2(0)
\end{bmatrix}^\top$, $T=3$, $L=1$, $u(0)= 1$, and $u(1)=u(2)=0$. We have
\begin{equation}
\label{eq:ex1-1}
\begin{bmatrix}
\mathcal{H}_1(u_{[0,2]}) \\
\mathcal{H}_1(y_{[0,2]}) 
\end{bmatrix}
=\begin{bmatrix}
1   & 0   & 0 \\
x_1(0) & x_2(0) & 1 
\end{bmatrix},
\end{equation}
which has full row rank for all $x(0)\in\mathbb{R}^2$. Therefore, 
\begin{equation}
\im \begin{bmatrix}
\mathcal{H}_1(u_{[0,2]}) \\
\mathcal{H}_1(y_{[0,2]}) 
\end{bmatrix}=\mathbb{R}^2.
\end{equation}
By \eqref{eq:th:1-b1_before}, this implies \eqref{eq:th:1-b1}. However, the given input sequence is not persistently exciting of order $3$. 
\end{example}

Although persistency of excitation is not necessary to guarantee \eqref{eq:th:1-b1} for a \emph{single} system, in disguise, Proposition~\ref{prop:1} makes a statement about \emph{all} controllable systems. Indeed, it implies that persistently exciting inputs of order $n+L$ are \emph{universal} in the sense that they guarantee statement \ref{prop:1(b)} \emph{for all} $(A,B,C,D)\in\mathcal{M}_\text{cont}$. A natural question to ask is whether there are other universal inputs than persistently exciting ones of order $n+L$. To formalize this question, we introduce the notion of a \emph{universal input}. 

\begin{definition}
\label{def:1}
Let $T\in\mathbb{N}$ and $L\in[1,T]$. An input  $u_{[0,T-1]}$ is called \emph{universal for determining the $L$--restricted behavior} if for every $(A,B,C,D)\in\mathcal{M}_\text{cont}$ and every $y_{[0,T-1]}$ satisfying $\begin{bmatrix}
u_{[0,T-1]} \\ y_{[0,T-1]}
\end{bmatrix}\in\mathfrak{B}_T(A,B,C,D)$ we have 
\begin{equation}
\mathfrak{B}_L(A,B,C,D)=\im \begin{bmatrix}
\mathcal{H}_L(u_{[0,T-1]}) \\
\mathcal{H}_L(y_{[0,T-1]}) 
\end{bmatrix}.
\end{equation}
\end{definition}\vspace{0.25 cm}

Proposition~\ref{prop:1(b)} implies that an input signal is universal for determining the $L$--restricted behavior if it is persistently exciting of order $n+L$. In this letter, we investigate necessary and sufficient conditions under which a sequence of inputs is universal in the sense of Definition~\ref{def:1}.

\section{Main Results}
\label{sec:III}

The following theorem presents the main result of this letter, showing that universality and persistency of excitation are equivalent.

\begin{theorem}
\label{th:2}
Let $T\in\mathbb{N}$ and $L\in[1,T]$. An input $u_{[0,T-1]}$ is universal for determining the $L$--restricted behavior \emph{if and only if} it is persistently exciting of order $n+L$.  
\end{theorem}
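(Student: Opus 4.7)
The \emph{if} direction follows directly from Proposition~\ref{prop:1(b)}. For the \emph{only if} direction, I would prove the contrapositive: given $u_{[0,T-1]}$ that is not persistently exciting of order $n+L$, construct $(A,B,C,D) \in \mathcal{M}_\text{cont}$ together with an output trajectory $y_{[0,T-1]}$ such that
\begin{equation*}
\im \begin{bmatrix} \mathcal{H}_L(u_{[0,T-1]}) \\ \mathcal{H}_L(y_{[0,T-1]}) \end{bmatrix} \subsetneq \mathfrak{B}_L(A,B,C,D).
\end{equation*}
The starting point is a nonzero $\xi = (\xi_0, \xi_1, \ldots, \xi_{n+L-1}) \in \mathbb{R}^{(n+L)m}$ in the left kernel of $\mathcal{H}_{n+L}(u_{[0,T-1]})$, encoding the linear recurrence $\sum_{i=0}^{n+L-1} \xi_i^\top u(t+i) = 0$ for $t \in [0, T-n-L]$.

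The core plan is to construct $(A,B)$ controllable and an initial state $x_0 \in \mathbb{R}^n$ such that the joint state-input Hankel
\begin{equation*}
\begin{bmatrix} \mathcal{H}_L(u_{[0,T-1]}) \\ \mathcal{H}_1(x_{[0,T-L]}) \end{bmatrix}
\end{equation*}
has rank strictly less than $n+Lm$, thereby violating the conclusion of Proposition~\ref{prop:1(a)}. I would exploit controllability to reinterpret $x_0$ as the state reached by driving the system from zero at time $-n$ with a virtual prehistory $\tilde u_{[-n,-1]}$; the extended signal $\tilde u_{[-n,T-1]}$ (equal to $u$ on $[0, T-1]$) then parametrizes the full trajectory through the dynamics. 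The choice is to pick this prehistory so that the recurrence $\sum_i \xi_i^\top \tilde u(t+i) = 0$ continues to hold for $t \in [-n, -1]$. Combining this extended recurrence with the dynamics $x(t+1) = Ax(t) + Bu(t)$, and reducing higher powers of $A$ via Cayley--Hamilton, one obtains a nontrivial linear combination of rows of the joint state-input Hankel that vanishes, producing the desired rank deficit. Finally, selecting $C \in \mathbb{R}^{p \times n}$ to align with this annihilating direction (and choosing $D$ correspondingly) propagates the deficit to the output-side Hankel, establishing the strict inclusion in $\mathfrak{B}_L(A,B,C,D)$.

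Extending the recurrence into the prehistory amounts to solving an $n$-dimensional linear system for $\tilde u_{[-n,-1]}$, which I expect to be solvable (often as a triangular system) whenever the leading block $\xi_0 \in \mathbb{R}^m$ is invertible. The main obstacle I anticipate is the degenerate case where every vector in the left kernel of $\mathcal{H}_{n+L}(u_{[0,T-1]})$ has a vanishing leading block, obstructing direct back-substitution. In this regime I would either shift the Hankel (working with $\mathcal{H}_{n+L-k}(u_{[k,T-1]})$, which remains rank deficient, to obtain a solvable recurrence on a shifted trajectory) or exploit the freedom in the choice of characteristic polynomial of a cyclic $A$ so that Markov-parameter cancellations in the state response compensate for the shift. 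Unifying these situations into a single construction that works uniformly across all non-PE inputs is where I anticipate the principal technical challenge to lie.
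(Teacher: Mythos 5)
Your overall strategy---contrapositive, extract a left-kernel vector of $\mathcal{H}_{n+L}(u_{[0,T-1]})$ encoding a recurrence, build a controllable system whose input-state Hankel is rank deficient, then choose $C$ to push the deficiency to the outputs---is the same skeleton as the paper's (Lemma~\ref{lem:henk} followed by the proof of Theorem~\ref{th:2}). But the construction at the heart of your plan is not carried out, and what you leave open is precisely where the work lies. Two concrete problems. First, you never say how $B$ is chosen, and for $m>1$ it cannot be an arbitrary controllable pair: the paper's Example~\ref{ex:sec:V-1} shows that for a non-persistently-exciting multi-input signal only a thin set of controllable systems can generate rank-deficient data, so no choice of prehistory or initial state will produce the deficit for a generic $(A,B)$. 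The paper builds $B=\sum_{i=0}^{n+L-1}A^i\zeta\eta_i^\top$ out of the kernel vector itself, and this structure is what makes the rest of the argument (including the controllability of $(A,B)$ via the condition $\spec A\cap\Lambda(\eta)=\varnothing$) go through. Second, your mechanism for the rank deficit is underspecified in a way that matters: extending the recurrence into a virtual prehistory and reducing powers of $A$ by Cayley--Hamilton naturally yields an annihilator of a Hankel that is \emph{deep in the state as well}, i.e., a relation among $x(t),\dots,x(t+n+L-1)$; that does not contradict Proposition~\ref{prop:1(a)}, which concerns $\mathcal{H}_1(x_{[0,T-L]})$ stacked on $\mathcal{H}_L(u_{[0,T-1]})$. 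To annihilate the depth-one state block you need the state itself, along some direction, to be a linear function of only the $L$ inputs $u(t),\dots,u(t+L-1)$; in the paper this comes from the matrices $E_i$ of \eqref{eq:Ei_recursive} together with a vector $\xi$ orthogonal to $\zeta,A\zeta,\dots,A^{n-2}\zeta$, which kills exactly the terms $E_i$ with $i\ge L$. Nothing in your sketch produces this step.

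The degenerate case you flag (leading block of the kernel vector vanishing, obstructing back-substitution for the prehistory) is real, and neither of your proposed fixes is developed; note that the paper avoids the prehistory device altogether by taking $x(0)=-\sum_{i=0}^{n+L-2}E_iu(i)$ in closed form, which is well defined for \emph{every} kernel vector $\eta$ and needs no invertibility of any leading block. Two smaller omissions: the case $T<n+L-1$ must be treated separately (there $\mathcal{H}_{n+L}(u_{[0,T-1]})$ is not even defined, but $\mathcal{H}_1(x_{[0,T-L]})$ has fewer than $n$ columns and is trivially row-rank deficient); and a rank deficit of the stacked input-output Hankel does not by itself yield strict inclusion in $\mathfrak{B}_L(A,B,C,D)$---you must exhibit a trajectory of the behavior that the annihilating functional does not kill, as the paper does with zero input and an initial state satisfying $w^\top\bar{x}(0)\neq0$.
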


To prove Theorem~\ref{th:2}, we need an auxiliary result presented in the following lemma, which is interesting on its own. This lemma shows that for any input that is not persistently exciting of order $n+L$, there exist a controllable system and an initial state such that the Hankel matrix of input-state data is rank-deficient, i.e., \eqref{eq:th:1-a1} does not hold\footnote{The proof of this lemma is inspired by the idea of \cite[Lem. 24]{camlibel2024beyond}.}. 

\begin{lemma}
\label{lem:henk}
Let $T\in\mathbb{N}$, $L\in[1,T]$, and $u_{[0,T-1]}\in\mathbb{R}^{mT}$. If $u_{[0,T-1]}$ is not persistently exciting of order $n+L$, then there exists a controllable pair $(A,B)$ and a state $x_{[0,T-L]}$ satisfying $\begin{bmatrix}
u_{[0,T-L]} \\ x_{[0,T-L]}
\end{bmatrix}\in\mathfrak{B}_{T-L+1}(A,B)$ such that
\begin{equation}
\label{eq:lem:henk-1}
\begin{bmatrix}
v^\top & w^\top
\end{bmatrix}\begin{bmatrix}
\mathcal{H}_L(u_{[0,T-1]}) \\
\mathcal{H}_1(x_{[0,T-L]}) 
\end{bmatrix}=0
\end{equation}
for some $v\in\mathbb{R}^{mL}$ and $w\in\mathbb{R}^n\backslash\{0\}$.
\end{lemma}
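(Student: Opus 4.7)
My plan is to reverse-engineer a controllable realization $(A,B)$ from the kernel vector and then choose $x(0)$ so that the resulting state trajectory annihilates together with $\alpha$. Let $\alpha = (\alpha_0^\top, \ldots, \alpha_{n+L-1}^\top)^\top \in \mathbb{R}^{m(n+L)}$ be a nonzero element of the left kernel of $\mathcal{H}_{n+L}(u_{[0,T-1]})$. The goal is to produce a controllable $(A,B)$, vectors $v = (v_0^\top, \ldots, v_{L-1}^\top)^\top \in \mathbb{R}^{mL}$ and $w \in \mathbb{R}^n \backslash \{0\}$, and a state trajectory compatible with $u_{[0, T-L]}$, such that $w^\top x(t) + \sum_{i=0}^{L-1} v_i^\top u(t+i) = 0$ for every $t \in [0, T-L]$. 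The guiding observation is that shifting this identity by one and substituting $x(t+1) = Ax(t) + Bu(t)$ replaces $w$ by $A^\top w$ and adds an input term involving $B^\top w$; iterating $n$ times and invoking Cayley--Hamilton for $A$ eliminates the state term entirely, yielding a purely input-side identity $\sum_{s=0}^{n+L-1} \beta_s^\top u(t+s) = 0$, whose coefficients $\beta_s \in \mathbb{R}^m$ are determined by the characteristic polynomial of $A$ together with $v$ and $B$. Forcing $\beta = \alpha$ reduces this derived identity to the assumed kernel condition.

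To carry this out I would take $A$ in a companion form associated with a monic polynomial $\chi(z) = z^n + c_{n-1} z^{n-1} + \cdots + c_0$ (whose coefficients $c$ are to be chosen) and fix $w = e_1$, arranging the form so that $(A^\top)^k w = e_{k+1}$ for $k = 0, \ldots, n-1$, and consequently $B^\top (A^\top)^k w$ reads off the $(k+1)$-st row of $B$. The equation $\beta = \alpha$ is then upper triangular in the unknowns $v_{L-1}, v_{L-2}, \ldots, v_0, B_1, \ldots, B_n$: one reads off $v_{L-1}$ from $\alpha_{n+L-1}$, then $v_{L-2}$ from $\alpha_{n+L-2}$ and $c_{n-1}$, and so on, eventually solving each $B_j$ as an explicit polynomial in $c$ and $\alpha$. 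Setting $x(0)$ componentwise via the formulas obtained by unrolling the desired annihilator backwards through the first $n-1$ rows of the companion dynamics makes the identity hold at $t = 0$; an induction then extends it to every $t \in [0, T-L]$, because the companion dynamics for the first $n-1$ components is trivially consistent with those formulas, while the closure on the last component coincides, via Cayley--Hamilton, with the requirement $\sum_s \alpha_s^\top u(t+s) = 0$, which is exactly the kernel condition at time $t$.

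The principal obstacle is guaranteeing that $c$ can be chosen so that $(A, B)$ is controllable. The controllability condition is a polynomial (in)equality in $c$ whose coefficients are polynomial in $\alpha$, so it suffices to show that the defining polynomial is not identically zero for any $\alpha \neq 0$. For companion $A$ this reduces to coprimeness of $\chi(z)$ and the numerator polynomial built from $B$, and I would establish it either by a direct genericity argument or by exhibiting explicit $c$'s that work in cases: the choice $c = 0$ (nilpotent $A$, $B = (\alpha_{n-1}, \ldots, \alpha_0)^\top$ up to reordering) already yields a controllable pair whenever the first $n$ entries of $\alpha$ contain a nonzero $\alpha_0$, and when $\alpha_0 = 0$ a small perturbation of a few $c_j$'s moves $B$ out of the uncontrollable locus. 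Once such $c$ is fixed, $x_{[0, T-L]}$ is determined by the dynamics, and $(v, w)$ with $w = e_1 \neq 0$ is the required nontrivial left annihilator of $\begin{bmatrix} \mathcal{H}_L(u_{[0,T-1]}) \\ \mathcal{H}_1(x_{[0,T-L]}) \end{bmatrix}$.
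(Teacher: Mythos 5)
Your plan is recognizably the same construction as the paper's, transplanted into companion coordinates: the paper takes a cyclic $A$ with a cyclic generator $\zeta$, sets $B=\sum_{i}A^i\zeta\eta_i^\top$, and its matrices $E_i$ from \eqref{eq:Ei_recursive} are exactly your ``unrolled'' componentwise formulas, with your $w=e_1$ playing the role of the vector $\xi$ orthogonal to $\zeta,A\zeta,\dots,A^{n-2}\zeta$. The difficulty is that the two steps carrying the real weight of the lemma are asserted rather than proved. The first is controllability of $(A,B)$. Since $B$ is itself a polynomial function of the free coefficients $c$ and of $\alpha$, it is not a priori clear that the uncontrollable locus can be avoided; your fallback ``when $\alpha_0=0$ a small perturbation of a few $c_j$'s moves $B$ out of the uncontrollable locus'' is precisely the claim that requires proof, and the statement that ``the defining polynomial is not identically zero for any $\alpha\neq 0$'' is announced as sufficient but never established. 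The paper resolves exactly this point by introducing the finite set $\Lambda(\eta)$ of roots of $\sum_i\lambda^i\eta_i$ and choosing $\spec A$ disjoint from it, so that for every left eigenvector $z$ one gets $z^*B=z^*\zeta\sum_i\lambda^i\eta_i^\top\neq 0$ and the Hautus test closes the argument; some device of this kind is unavoidable, and without it your proof does not yet deliver a \emph{controllable} pair.

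The second gap is in the induction over $t$. The kernel condition $\sum_{s=0}^{n+L-1}\alpha_s^\top u(t+s)=0$ is only available for $t\in[0,T-n-L]$, and your componentwise formula for the $n$-th state component references $u(t+n+L-2)$, which ceases to exist once $t>T-L-n+1$. Hence the induction as you describe it --- ``the closure on the last component coincides \dots with the kernel condition at time $t$'' --- establishes the full state formula only for $t\le T-L-n+1$ and says nothing about the last $n-1$ time steps, which are needed to cover all columns of $\mathcal{H}_1(x_{[0,T-L]})$. This is repairable, since the formula for the first component propagates through rows $1,\dots,n-1$ of the companion dynamics without invoking the last row, but it is exactly the part for which the paper needs the separate tail formula \eqref{eq:statetraj-2} together with the cancellations \eqref{eq:final_arg2}--\eqref{eq:final_arg3}, and your writeup does not address it. Finally, the degenerate case $T<n+L-1$, where $\mathcal{H}_{n+L}(u_{[0,T-1]})$ has no columns and the conclusion instead follows trivially because $\mathcal{H}_1(x_{[0,T-L]})$ has fewer than $n$ columns, falls outside your construction and must be treated separately, as the paper does.
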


\begin{proof}
We distinguish two cases: $T<n+L-1$ and \mbox{$T\geq n+L-1$}. In case $T<n+L-1$, we have
\begin{equation}
\rank\mathcal{H}_1(x_{[0,T-L]})\leq \min\{n,T-L+1\}<n.
\end{equation}
This implies that \eqref{eq:lem:henk-1} holds with $v=0$ and a nonzero \mbox{$w\in\ker\mathcal{H}_1(x_{[0,T-L]})^\top$}. Next, we consider the case \mbox{$T\geq n+L-1$}. Let $\eta\in\ker \mathcal{H}_{n+L}(u_{[0,T-1]})^\top\backslash\{0\}$.
Define $\eta_0,\ldots,\eta_{n+L-1}\in\mathbb{R}^m$ by $\begin{bmatrix}
\eta_0^\top & \cdots & \eta_{n+L-1}^\top
\end{bmatrix}^\top=\eta$. Observe that for every $t\in[0,T-n-L]$ we have
\begin{equation}
\label{eq:ker2}
\sum_{i=0}^{n+L-1}\eta_i^\top u(t+i)=0.
\end{equation}
Now, we construct a controllable pair $(A,B)$ and an initial condition $x(0)$ such that the generated input-state trajectory satisfies \eqref{eq:lem:henk-1} for some $v\in\mathbb{R}^{mL}$ and $w\in\mathbb{R}^n\backslash\{0\}$. To this end, define
\begin{equation}
\label{eq:Lambda}
\Lambda(\eta)\coloneqq\set{\lambda\in\mathbb{C}}{\textstyle\sum_{i=0}^{n+L-1}\lambda^i \eta_i=0}.
\end{equation}
We note that the set $\Lambda(\eta)$ is finite. Let $A\in\mathbb{R}^{n\times n}$ be a cyclic matrix satisfying\footnote{For example, a Jordan block with its eigenvalue not in $\Lambda(\eta)$.} $\spec A\cap\Lambda(\eta)=\varnothing$. Let $\zeta\in\mathbb{R}^n$ be such that the pair $(A,\zeta)$ is controllable. Define matrices $E_i$, \mbox{$i\in[-1,n+L-1]$}, by the recursion
\begin{equation}
\label{eq:Ei_recursive}
E_{i-1}=AE_{i}+\zeta\eta_i^\top,
\end{equation}
where $E_{n+L-1}=0$. Let $B=E_{-1}=\sum_{i=0}^{n+L-1}A^i\zeta\eta_i^\top$. We claim that $(A,B)$ is controllable. To see this, let\linebreak $z\in\mathbb{C}^n\backslash \{0\}$ be a left eigenvector of $A$, i.e., $z^*A=\lambda z^*$ for some $\lambda\in\spec A$. By the Hautus test and controllability of the pair $(A,\zeta)$, this implies that $z^*\zeta\neq 0$. This in turn implies that $z^* B=z^* \zeta\sum_{i=0}^{n+L-1} \lambda^i \eta_i^\top\neq 0$, thus the pair $(A,B)$ is controllable. Take
\begin{equation}
\label{eq:x(0)}
x(0)=-\sum_{i=0}^{n+L-2} E_i u(i),
\end{equation}
and consider the state trajectory of $(A,B)$, starting from $x(0)$, given by
\begin{equation}
\label{eq:state_construction}
x(t+1)=Ax(t)+Bu(t),\ \hspace{0.25 cm} t\in[0,T-L].
\end{equation}
We claim that this trajectory obeys the following formula with $s\coloneqq T-L-n+1$:
\begin{subequations}
\label{eq:statetraj}
\begin{align}
\label{eq:statetraj-1}
\hspace{-0.75 cm} x(t)&=-\sum_{i=0}^{n+L-2} E_i u(t+i)\hspace{0.25 cm} \text{if}\hspace{0.25 cm} t\in[0,s], \\
\hspace{-0.75 cm} x(t+s)&=\sum_{j=0}^{t-1}\sum_{i=j}^{n+L-2} A^{t-j-1}\zeta\eta_{i-j}^\top u(i+s)\\
\label{eq:statetraj-2}
&-\sum_{i=t}^{n+L-2}E_{i-t} u(i+s)  \hspace{0.25 cm} \text{if}\hspace{0.25 cm} t\in[1,n-1].
\end{align}
\end{subequations}
We use induction to prove \eqref{eq:statetraj-1} and \eqref{eq:statetraj-2}. Note that \eqref{eq:statetraj-1} holds for $t=0$ in view of \eqref{eq:x(0)}. Assume that \eqref{eq:statetraj-1} holds for some $t\in[0,s-1]$. Then, using \eqref{eq:state_construction} it follows that
\begin{subequations}
\mathtoolsset{showonlyrefs=false}
\label{eq:moved}
\begin{align}
&x(t+1)=-A\sum_{i=0}^{n+L-2} E_i u(t+i)+E_{-1}u(t) \\
& \stackrel{\eqref{eq:Ei_recursive}}{=} -\sum_{i=1}^{n+L-2} E_{i-1} u(t+i)+\sum_{i=0}^{n+L-2} \zeta\eta_{i}^\top u(t+i).
\end{align}
\mathtoolsset{showonlyrefs=true}
\end{subequations}
From \eqref{eq:moved} we have
\begin{equation}
\begin{split}
x(t+1)&\stackrel{\eqref{eq:ker2}}{=} -\!\!\!\sum_{i=0}^{n+L-3} \!\! E_{i} u(t\!+\!1\!+\!i)-\zeta\eta_{n+L-1}^\top u(t\!+\!n\!+\!L\!-\!1) \\
&\stackrel{\eqref{eq:Ei_recursive}}{=} -\sum_{i=0}^{n+L-2} E_{i} u(t+1+i),
\end{split}
\end{equation}
which implies that \eqref{eq:statetraj-1} is satisfied for $t+1$. This proves \eqref{eq:statetraj-1}. 

Next, we prove \eqref{eq:statetraj-2}. First, note that \eqref{eq:moved} holds for $t=s$ due to \eqref{eq:state_construction} and \eqref{eq:statetraj-1}. Thus, \eqref{eq:statetraj-2} holds for $t=1$. Assume that \eqref{eq:statetraj-2} holds for some $t\in[1,n-2]$. We observe that using \eqref{eq:state_construction}, we have
\begin{equation}
\label{eq:sth}
\begin{split}
&x(t+s+1)=-\sum_{i=t}^{n+L-2}AE_{i-t} u(s+i) \\ &+\sum_{j=0}^{t-1}\sum_{i=j}^{n+L-2} A^{t-j}\zeta\eta_{i-j}^\top u(s+i)+E_{-1}u(t+s).
\end{split}
\end{equation}
Using \eqref{eq:Ei_recursive}, \eqref{eq:sth} can be reduced to
\begin{equation}
\begin{split}
&x(t+s+1)=-\sum_{i=t+1}^{n+L-2}E_{i-t-1} u(s+i) \\ &+\sum_{i=t}^{n+L-2}\zeta\eta_{i-t} u(s+i)+\sum_{j=0}^{t-1}\sum_{i=j}^{n+L-2} A^{t-j}\zeta\eta_{i-j}^\top u(s+i) \\
&=\!-\!\!\sum_{i=t+1}^{n+L-2}\!E_{i-t-1} u(s\!+\!i)+\sum_{j=0}^{t}\sum_{i=j}^{n+L-2}\! A^{t-j}\zeta\eta_{i-j}^\top u(s\!+\!i),
\end{split}
\end{equation}
which implies that \eqref{eq:statetraj-2} is satisfied for $t+1$. This proves that \eqref{eq:statetraj-2} holds. Now, let $\xi\neq 0$ be such that $\xi^\top A^i \zeta=0$ for all $i\in[0,n-2]$. Observe from \eqref{eq:Ei_recursive} that for every $i\in[L,n+L-1]$ we have $\xi^\top E_i=0$. Thus, we have
\begin{equation}
\label{eq:final_arg1}
\xi^\top\sum_{i=0}^{n+L-2} E_i u(t+i)=\xi^\top\sum_{i=0}^{L-1} E_i u(t+i),
\end{equation}
and for every $t\in[1,n-1]$ we have
\begin{equation}
\label{eq:final_arg2}
\xi^\top\sum_{i=t}^{n+L-2}E_{i-t} u(i+s)=\xi^\top\sum_{i=0}^{L-1} E_i u(i+t+s)
\end{equation}
\begin{equation}
\label{eq:final_arg3}
\text{and }\ \xi^\top\sum_{j=0}^{t-1}\sum_{i=j}^{n+L-2} \!\!A^{t-j-1}\zeta\eta_{i-j}^\top u(i+s)=0.
\end{equation}
Based on \eqref{eq:statetraj-1} and \eqref{eq:statetraj-2}, one can verify using \eqref{eq:final_arg1}, \eqref{eq:final_arg2}, and \eqref{eq:final_arg3} that for every $t\in[0,T-L]$ we have \mbox{$\xi^\top x(t)=-\xi^\top \sum_{i=0}^{L-1} E_i u(t+i)$}. Therefore, \eqref{eq:lem:henk-1} holds with $v=\begin{bmatrix}
E_0 & \cdots & E_{L-1}
\end{bmatrix}^\top \xi$ and $w=\xi$.
\end{proof}

Now, we use Lemma~\ref{lem:henk} to prove Theorem~\ref{th:2}. 

\textit{Proof of Theorem~\ref{th:2}:} 
The ``if'' part follows from Proposition~\ref{prop:1(b)}. For the ``only if'' part, assume that the input $u_{[0,T-1]}$ is not persistently exciting of order $n+L$. Based on Lemma~\ref{lem:henk}, this implies that there exists a controllable pair $(A,B)$ and $x_{[0,T-L]}$ satisfying \eqref{eq:th:1-a2} such that \eqref{eq:lem:henk-1} holds for some $v\in\mathbb{R}^{mL}$ and $w\in\mathbb{R}^n\backslash\{0\}$. Let $C\in\mathbb{R}^{p\times n}$ be such that its first row equals $w^\top$. Also, let $D=0$ and \mbox{$y_{[0,T-1]}=Cx_{[0,T-1]}$}. Observe that $y_{[0,T-1]}$ satisfies \eqref{eq:th:1-b2}. Define $e_1\coloneqq\begin{bmatrix}
1 & 0 & \cdots & 0
\end{bmatrix}^\top\in\mathbb{R}^p$ and verify that
\begin{equation}
\label{eq:th2_pf1}
\begin{bmatrix}
v^\top \!\!&\!\! e_1^\top \!\!&\!\! 0
\end{bmatrix}\!\!\begin{bmatrix}
\mathcal{H}_L(u_{[0,T-1]}) \\ \mathcal{H}_L(y_{[0,T-1]})
\end{bmatrix}\!=\!\begin{bmatrix}
v^\top \!\!&\!\! w^\top 
\end{bmatrix}\!\!\begin{bmatrix}
\mathcal{H}_L(u_{[0,T-1]}) \\ \mathcal{H}_1(x_{[0,T-L]})
\end{bmatrix}\!\!=\!0.
\end{equation}
To prove that \eqref{eq:th:1-b1} does not hold, it suffices to show that for some $\begin{bmatrix}
\bar{u}_{[0,L-1]} \\ \bar{y}_{[0,L-1]}
\end{bmatrix}\in\mathfrak{B}_L(A,B,C,D)$ we have
\begin{equation}
\label{eq:th2_pf1p}
\begin{bmatrix}
v^\top & e_1^\top & 0 
\end{bmatrix}\begin{bmatrix}
\bar{u}_{[0,L-1]} \\ \bar{y}_{[0,L-1]}
\end{bmatrix}\neq 0.
\end{equation}
To this end, take $\bar{u}(t)=0$ for all $t\in[0,L-1]$ and $\bar{x}(0)\in\mathbb{R}^n$ such that $w^\top \bar{x}(0)\neq 0$. Let $\bar{y}_{[0,L-1]}$ be the output trajectory generated by $(A,B,C,D)$ with initial state $\bar{x}(0)$ and input $\bar{u}_{[0,L-1]}$. Now, one can observe that \eqref{eq:th2_pf1p} holds since
\begin{equation}
\begin{bmatrix}
v^\top & e_1^\top & 0 
\end{bmatrix}\begin{bmatrix}
\bar{u}_{[0,L-1]} \\ \bar{y}_{[0,L-1]}
\end{bmatrix}=e_1^\top \bar{y}(0)=w^\top \bar{x}(0)\neq 0.
\end{equation}
Therefore, \eqref{eq:th:1-b1} does not hold. \hfill \QED

\section{Discussion and Further Results}
\label{sec:IV}

In this section, we discuss connections between our results and those in the literature. 

\subsection{Persistency of excitation of order $n$}

A corollary of the fundamental lemma \cite[Cor. 2(i)]{willems2005note} (also see \cite[Cor. 3(1)]{markovsky2021behavioral}) asserts that if the input $u_{[0,T-1]}$ is persistently exciting of order $n$, then $T$--length state trajectories generated by any controllable system satisfy $\rank\mathcal{H}_1(x_{[0,T-1]})=n$. This result is not fully correct due to a small error in the time horizon. We state the correct version of \cite[Cor. 2(i)]{willems2005note} as the following proposition. The proof of this result is omitted here since it follows similar lines as the proof of \cite[Thm. 1]{van2020willems}. 

\begin{proposition}
\label{prop:cor}
Let $T\in\mathbb{N}$, $u_{[0,T]}\in\mathbb{R}^{m(T+1)}$, and $(A,B,C,D)\in\mathcal{M}_\text{cont}$. If $u_{[0,T-1]}$ is persistently exciting of order $n$, then $\mathcal{H}_1(x_{[0,T]})$ has full row rank for all $x_{[0,T]}$ satisfying
\begin{equation}
\begin{bmatrix}
\label{eq:prop:cor}
u_{[0,T]} \\ x_{[0,T]}
\end{bmatrix}\in\mathfrak{B}_{T+1}(A,B).
\end{equation}%

\end{proposition}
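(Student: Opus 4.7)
My plan is to argue by contradiction, adapting the proof strategy behind Proposition~\ref{prop:1} while exploiting the fact that here the state trajectory is one time step longer than the input sequence used in the persistency of excitation hypothesis. Suppose $\mathcal{H}_1(x_{[0,T]})$ does not have full row rank; then there is a nonzero $w\in\mathbb{R}^n$ with $w^\top x(t) = 0$ for every $t\in[0,T]$.

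Next I would expand each of these $T+1$ equations using the explicit state formula $x(t) = A^t x(0) + \sum_{i=0}^{t-1} A^{t-1-i} B u(i)$ and, for each $k \in [0, T-n]$, take the linear combination of the equations indexed by $t = k, k+1, \ldots, k+n$ with coefficients $p_0, p_1, \ldots, p_{n-1}, 1$ drawn from the characteristic polynomial $p(\lambda)$ of $A$. Cayley--Hamilton collapses the $x(0)$-contribution to zero in every such combination. A careful bookkeeping of the $u(j)$-coefficients shows that only $u(k), \ldots, u(k+n-1)$ survive, each multiplied by a scalar
\begin{equation}
\delta_i = \sum_{r=0}^{n-i-1} p_{r+i+1}\, w^\top A^r B, \qquad i\in[0,n-1],
\end{equation}
which is manifestly independent of the shift $k$.

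Assembling the resulting $T-n+1$ identities into matrix form yields $\begin{bmatrix} \delta_0 & \cdots & \delta_{n-1} \end{bmatrix} \mathcal{H}_n(u_{[0,T-1]}) = 0$. Since $u_{[0,T-1]}$ is persistently exciting of order $n$, this forces $\delta_i = 0$ for every $i\in[0,n-1]$. These equations have a triangular structure in the unknowns $w^\top A^r B$: the equation $\delta_{n-1} = 0$ reads $w^\top B = 0$, and each subsequent $\delta_{n-1-k} = 0$ yields $w^\top A^k B = 0$ by back-substitution using what is already known. Consequently $w^\top \begin{bmatrix} B & AB & \cdots & A^{n-1}B \end{bmatrix} = 0$, which contradicts controllability of $(A,B)$.

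The step I expect to be the main obstacle is the combinatorial bookkeeping in the Cayley--Hamilton combination: checking that the chosen weighting produces exactly the Hankel matrix $\mathcal{H}_n(u_{[0,T-1]})$ on the input side, that the surviving coefficients $\delta_i$ are genuinely independent of $k$, and that the summation indices stay within $[0,T-1]$ throughout. Once that accounting is verified, the triangular solve and the Hautus-type contradiction with controllability are routine.
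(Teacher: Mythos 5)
Your proof is correct. Note that the paper itself omits the proof of Proposition~\ref{prop:cor}, deferring to the argument of \cite[Thm.~1]{van2020willems}; that argument also proceeds by contradiction from a nonzero left-kernel vector, but eliminates the state by repeatedly substituting the recursion $x(t+1)=Ax(t)+Bu(t)$ to build up an annihilator of a progressively deeper Hankel matrix of $u$, whereas you use the variation-of-constants formula together with Cayley--Hamilton to kill both the $x(0)$-contribution and the inputs prior to time $k$ in one stroke, landing directly on $\begin{bmatrix}\delta_0 & \cdots & \delta_{n-1}\end{bmatrix}\mathcal{H}_n(u_{[0,T-1]})=0$. The bookkeeping you flag as the main risk does check out: for $i\le k-1$ the coefficient of $u(i)$ is $p(A)A^{k-1-i}B=0$, the surviving coefficients $\delta_i$ are shift-independent as claimed, and the range $k\in[0,T-n]$ uses exactly $x(k),\dots,x(k+n)$ with $k+n\le T$ and $u(k),\dots,u(k+n-1)$ with $k+n-1\le T-1$ --- this is precisely where the extra time step in $x_{[0,T]}$ versus $u_{[0,T-1]}$ is needed, i.e., the horizon correction the paper makes to \cite[Cor.~2(i)]{willems2005note}. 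The triangular solve with $p_n=1$ then gives $w^\top A^kB=0$ for $k\in[0,n-1]$ and the controllability contradiction. One cosmetic slip: for $m>1$ each $\delta_i=\sum_{r=0}^{n-i-1}p_{r+i+1}w^\top A^rB$ is a $1\times m$ row vector rather than a scalar, but the matrix identity and the conclusion $\delta_i=0$ are unaffected.
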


Now, using the results of this letter, we can show that persistency of excitation of order $n$ is not only sufficient but also \emph{necessary} to guarantee that the state trajectory $x_{[0,T-1]}$ generated by any controllable system satisfies \mbox{$\rank\mathcal{H}_1(x_{[0,T-1]})=n$}. 

\begin{corollary}
Let $T\in\mathbb{N}$ and $u_{[0,T]}\in\mathbb{R}^{m(T+1)}$. Then, $\mathcal{H}_1(x_{[0,T]})$ has full row rank for all $x_{[0,T]}$ satisfying \eqref{eq:prop:cor} and all controllable $(A,B)\in\mathbb{R}^{n\times n}\times \mathbb{R}^{n\times m}$ if and only if $u_{[0,T-1]}$ is persistently exciting of order $n$. 
\end{corollary}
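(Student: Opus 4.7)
The ``if'' direction is immediate from Proposition~\ref{prop:cor}. For the ``only if'' direction, I argue by contrapositive: assuming $u_{[0,T-1]}$ is not persistently exciting of order $n$, the task is to exhibit a controllable pair $(A,B)$ and a compatible trajectory $x_{[0,T]}$ for which $\mathcal{H}_1(x_{[0,T]})$ is rank-deficient, i.e., $w^\top x(t)=0$ for all $t\in[0,T]$ for some $w\neq 0$.

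The first step is to reduce to Lemma~\ref{lem:henk}. Given a nonzero $[\eta_0^\top~\cdots~\eta_{n-1}^\top]^\top\in\ker\mathcal{H}_n(u_{[0,T-1]})^\top$, the padded vector $\eta\coloneqq[\eta_0^\top~\cdots~\eta_{n-1}^\top~0]^\top$ is a nonzero element of $\ker\mathcal{H}_{n+1}(u_{[0,T]})^\top$, so $u_{[0,T]}$ is not persistently exciting of order $n+1$. I would then replay the construction inside the proof of Lemma~\ref{lem:henk} with $T$ replaced by $T+1$, $L=1$, and this specific padded $\eta$. This produces a controllable pair $(A,B)$, an initial state, and vectors $v\in\mathbb{R}^m$ and $w=\xi\neq 0$ satisfying $v^\top u(t)+w^\top x(t)=0$ for every $t\in[0,T]$.

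The crux and main obstacle is showing that this $v$ must in fact be zero, so that the relation collapses to $w^\top x(t)=0$. In the lemma's construction, $v=E_0^\top\xi$, and unwinding the recursion $E_{i-1}=AE_i+\zeta\eta_i^\top$ with $E_n=0$ together with the trailing zero $\eta_n=0$ writes $E_0$ as a linear combination of $\zeta,A\zeta,\ldots,A^{n-2}\zeta$. Since $\xi$ is chosen precisely to annihilate that family, $v=0$ follows. The subtlety is that Lemma~\ref{lem:henk} as a black box does not guarantee $v=0$; one has to open its proof and exploit the trailing-zero freedom in $\eta$ granted by our reduction from ``not PE of order $n$'' to ``not PE of order $n+1$''. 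The degenerate case where $T+1<n$ is already handled trivially by the first branch of the lemma's proof, where $v=0$ holds automatically.
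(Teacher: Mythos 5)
Your proof is correct and rests on the same underlying construction as the paper's: both reuse the pair $(A,B)$, the initial state $x(0)$, and the vector $\xi$ built in the proof of Lemma~\ref{lem:henk}. The only divergence is the entry point. The paper simply instantiates that construction with $L=0$, so that $v\in\mathbb{R}^{mL}$ is vacuous and the identity $\xi^\top x(t)=-\xi^\top\sum_{i=0}^{L-1}E_iu(t+i)$ collapses to $\xi^\top \mathcal{H}_1(x_{[0,T]})=0$ with no further work. You instead stay within the lemma's stated range by padding the annihilator of $\mathcal{H}_n(u_{[0,T-1]})$ with a trailing zero block, applying the $L=1$ construction to $u_{[0,T]}$, and then checking that $v=E_0^\top\xi=0$ because the trailing zero $\eta_n=0$ removes the $A^{n-1}\zeta$ term from $E_0$, leaving a combination of $\zeta,A\zeta,\dots,A^{n-2}\zeta$ that $\xi$ annihilates. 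Your reduction is slightly longer but has the merit of making explicit why the $v$-component disappears, and of correctly flagging that the lemma cannot be used as a black box here; the degenerate case $T+1<n$ is also handled properly. Both routes are valid and yield the same witness $\xi^\top\mathcal{H}_1(x_{[0,T]})=0$.
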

\begin{proof}
The ``if'' part is a direct consequence of Proposition~\ref{prop:cor}. One can prove the ``only if'' part by taking $L=0$, and $(A,B)$, $x(0)$, and $\xi$ as in the proof of Lemma~\ref{lem:henk}. Then, one can observe that $\xi^\top \mathcal{H}_1(x_{[0,T]})=0$.
\end{proof}

\subsection{Single-input case}

For the single-input case, $m=1$, we can sharpen Lemma~\ref{lem:henk} by showing that for \emph{almost any} controllable pair $(A,B)$, there exists $x(0)\in\mathbb{R}^n$ for which the Hankel matrix of the input-state data is rank deficient. A similar observation has been made in \cite[Thm. 3]{markovsky2023persistency}. This result is presented in the following proposition. We recall that the set $\Lambda(\eta)$ has been defined in~\eqref{eq:Lambda}.

\begin{proposition}
\label{prop:m1}
Let $m=1$, $T\in\mathbb{N}$, $L\in[1,T]$, $u_{[0,T-1]}\in~\mathbb{R}^{T}$, and $\eta\in\mathbb{R}^{n+L}\backslash\{0\}$ be such that $\eta^\top \mathcal{H}_{n+L}(u_{[0,T-1]})=0$. Then, for any $(A,B)$ such that $\spec A\cap\Lambda(\eta)=\varnothing$ there exists a state $x_{[0,T-L]}$, satisfying~\eqref{eq:th:1-a2}, such that
\begin{equation}
\label{eq:rank_deficient}
\rank \begin{bmatrix}
\mathcal{H}_L(u_{[0,T-1]}) \\
\mathcal{H}_1(x_{[0,T-L]}) 
\end{bmatrix}<n+L.
\end{equation}
\end{proposition}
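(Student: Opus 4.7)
The plan is to reduce to the construction carried out in the proof of Lemma~\ref{lem:henk}. In that proof the matrix $B$ is \emph{built} from a cyclic matrix $A$ and a cyclic vector $\zeta$ as $B=\sum_{i=0}^{n+L-1} A^i \zeta \eta_i$. Here $(A,B)$ is instead given, so the task is to recover a suitable $\zeta$ and then replay the remainder of the argument verbatim.

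First I would set $p(\lambda) := \sum_{i=0}^{n+L-1} \eta_i \lambda^i$ and observe that the eigenvalues of $p(A)$ are $\{p(\lambda) : \lambda \in \spec A\}$, none of which vanishes because $\spec A \cap \Lambda(\eta) = \varnothing$; hence $p(A)$ is invertible. Defining $\zeta := p(A)^{-1} B$ then yields $B = p(A)\zeta = \sum_{i=0}^{n+L-1} A^i \zeta \eta_i$, which is exactly the identity $B = E_{-1}$ used in the proof of Lemma~\ref{lem:henk}. Controllability of $(A,\zeta)$ follows from a PBH-style check: for any left eigenvector $z$ of $A$ with $z^\top A = \lambda z^\top$, one has $z^\top B = p(\lambda)\,z^\top \zeta$; since $(A,B)$ is controllable $z^\top B \neq 0$, while $p(\lambda) \neq 0$ by the spectral hypothesis, so $z^\top \zeta \neq 0$.

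With $\zeta$ in hand I would define the matrices $E_i$ via the recursion $E_{i-1} = A E_i + \zeta \eta_i$ with $E_{n+L-1} = 0$, choose the initial state $x(0) = -\sum_{i=0}^{n+L-2} E_i u(i)$, and generate the state trajectory of $(A,B)$. The closed-form expressions \eqref{eq:statetraj-1}--\eqref{eq:statetraj-2} then hold unchanged, as their derivation uses only the recursion and the identity $B = E_{-1}$. Picking $\xi \neq 0$ with $\xi^\top A^i \zeta = 0$ for $i \in [0,n-2]$ (possible since this is $n-1$ linear conditions on an $n$-dimensional space) and running the closing computation of Lemma~\ref{lem:henk} produces $v \in \mathbb{R}^L$ and $w = \xi \in \mathbb{R}^n \setminus \{0\}$ that left-annihilate $\begin{bmatrix} \mathcal{H}_L(u_{[0,T-1]}) \\ \mathcal{H}_1(x_{[0,T-L]}) \end{bmatrix}$, so its row rank is strictly less than $n+L$, which is exactly \eqref{eq:rank_deficient}.

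The only point that is genuinely new relative to Lemma~\ref{lem:henk} is the controllability check for $(A, p(A)^{-1} B)$; this is where the hypothesis $\spec A \cap \Lambda(\eta) = \varnothing$ is consumed, and I expect it to be the main (though short) step requiring care to present cleanly.
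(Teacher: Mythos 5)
Your proposal is correct and follows essentially the same route as the paper: invertibility of $p(A)=\sum_{i=0}^{n+L-1}A^i\eta_i$ from the spectral condition, the definition $\zeta=p(A)^{-1}B$, and a verbatim replay of the construction from the proof of Lemma~\ref{lem:henk}. The only deviation is your controllability check for $(A,\zeta)$, which invokes controllability of $(A,B)$ --- not a hypothesis of the proposition --- but that step is superfluous, since nothing in the remaining argument (in particular the existence of $\xi\neq 0$ with $\xi^\top A^i\zeta=0$ for $i\in[0,n-2]$) requires it.
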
\vspace{0.25 cm}
\begin{proof}
Let $A\in\mathbb{R}^{n\times n}$ be such that $\spec A\cap\Lambda(\eta)=\varnothing$ and $B\in\mathbb{R}^{n}$. Note that $\spec A\cap\Lambda(\eta)=\varnothing$ implies that the matrix $\sum_{i=0}^{n+L-1} A^i\eta_i$ is invertible. Let $\zeta=(\sum_{i=0}^{n+L-1} A^i\eta_i)^{-1}B$. Also, let $E_i$ for $i\in[0,n+L-1]$ and $x(0)$ be as in the proof of Lemma~\ref{lem:henk}. The state trajectory generated by $(A,B)$ starting from $x(0)$ satisfies \eqref{eq:statetraj-1} and \eqref{eq:statetraj-2}. Now, let $\xi\in\mathbb{R}^n\backslash\{0\}$ be such that $\xi^\top A^i \zeta=0$ for all $i\in[0,n-2]$. This implies that for every $i\in[L,n+L-1]$ we have $\xi^\top E_i=0$. Thus, based on \eqref{eq:statetraj-1} and \eqref{eq:statetraj-2}, one can verify that for every $t\in[0,T-L]$ we have $\xi^\top x(t)=-\xi^\top \sum_{i=0}^{L-1} E_i u(t+i)$. Therefore, \eqref{eq:lem:henk-1} holds with $v=\begin{bmatrix}
E_0 & \cdots & E_{L-1}
\end{bmatrix}^\top \xi$ and $w=\xi$.
\end{proof}

We note that $\Lambda(\eta)$ is a finite set since $\eta\neq 0$. This implies that the set of matrices $A\in\mathbb{R}^{n\times n}$ satisfying \linebreak $\spec A\cap\Lambda(\eta)=\varnothing$ is dense in $\mathbb{R}^{n\times n}$. Hence, Proposition~\ref{prop:m1} shows that, in the single-input case, if the input signal is not persistently exciting of order $n+L$, then \emph{almost any} controllable system can generate rank-deficient data, i.e., data satisfying \eqref{eq:rank_deficient}. As we will demonstrate in Example~\ref{ex:sec:V-1}, this is generally not true for multi-input systems. 

\subsection{Identification of the state-space system's parameters}

Based on Willems et al.'s fundamental lemma, the controllability of $(A,B)$, along with persistency of excitation of the input of order $n+L$, guarantees that the generated input-output data determines the $L$--restricted behavior of the system. For this, one does not need the pair $(C,A)$ to be observable. However, if $(C,A)$ is observable and $L$ is strictly larger than the so-called \emph{lag} of the system, then the state-space system's parameters $(A,B,C,D)$ can be recovered from the behavior $\mathfrak{B}_L(A,B,C,D)$ up to a similarity transformation (see \cite[Thm. 8.16]{markovsky2006exact}). Therefore, for controllable and observable systems with lag $\ell$, inputs that are universal for determining the $(\ell+1)$--restricted behavior enable system identification up to a similarity transformation. This can be accomplished, e.g., using the subspace identification methods in \cite[Ch. 9]{VERHAEGEN01111992}.

\section{Numerical Examples}
\label{sec:V}
 
In this section, we present two numerical examples. For both examples, given an input signal that is not persistently exciting of order $n+L$, we use a similar procedure as in the proof of Lemma~\ref{lem:henk} to construct controllable systems capable of generating rank-deficient data, i.e., data not satisfying the rank condition \eqref{eq:th:1-a1}\footnote{For these examples, we use the MATLAB code available at \href{https://github.com/a-shakouri/universal-inputs-and-persistency-of-excitation}{https://github.com/a-shakouri/universal-inputs-and-persistency-of-excitation}.}. 

\begin{example}
\label{ex:sec:V-2}
Let $n=3$, $m=2$, $L=1$, and $T=8$. Consider the input signal given in Table~\ref{table:2}. This input is not persistently exciting of order $4$. The corresponding values of $\eta_i$, $i\in[0,3]$, satisfying \eqref{eq:ker2}, are computed as follows:
\begin{equation}
\begin{bmatrix}
\eta_0 \!\!&\!\! \eta_1 \!\!&\!\! \eta_2 \!\!&\!\! \eta_3
\end{bmatrix}\!\!=\!\!\begin{bmatrix}
-0.2039 & -0.1162 & 0.0598 & -0.0782 \\
0.4924 & 0.4964 & 0.6281 & 0.2277
\end{bmatrix}\!.
\end{equation}
We choose a matrix $A$ and a vector $\zeta$, with the entries drawn uniformly at random from intervals $[-3,3]$ and $[-1,1]$, respectively, as follows:
\begin{equation}
A=\begin{bmatrix}
-0.2675 & -1.3834 & -1.4581 \\
-0.2088 & -2.2509 & -2.5228 \\
2.3349 &  -1.5816 & 1.0206
\end{bmatrix},\hspace{0.25 cm} \zeta=\begin{bmatrix}
-0.4351 \\ 0.0482 \\ 0.8496
\end{bmatrix}.
\end{equation}
The pair $(A,\zeta)$ is controllable. Next, we use the recursion \eqref{eq:Ei_recursive} to obtain $E_{i}$ for $i\in[-1,2]$ as follows:
\begin{equation}
\begin{split}
&E_2\!=\!\begin{bmatrix}
0.0340 & -0.0991 \\
-0.0038 & 0.0110 \\
-0.0664 & 0.1935
\end{bmatrix}\!\!,\ E_1=\begin{bmatrix}
0.0669 & -0.5441 \\
0.1718 & -0.4618 \\
0.0684 & 0.4823
\end{bmatrix}\!\!, \\
&E_0\!=\!\begin{bmatrix}
-0.3047 & -0.1349 \\
-0.5788 & -0.0398 \\
-0.1444 & 0.3741
\end{bmatrix}\!\!,\ E_{-1}\!=\!\begin{bmatrix}
1.1815 & -0.6687 \\
1.7209 & -0.8023 \\
-0.1166 &  0.5480
\end{bmatrix}\!\!.
\end{split}
\end{equation}
We let $B=E_{-1}$ and compute $x(0)$ from \eqref{eq:x(0)} to have
\begin{equation}
x(0)=\begin{bmatrix} -0.461 & -0.3879 & 0.0665 \end{bmatrix}^\top.
\end{equation}
We also compute $\xi^\top=\begin{bmatrix}
0 & 0 & 1
\end{bmatrix}\begin{bmatrix}
\zeta & A\zeta & A^{2}\zeta
\end{bmatrix}^{-1}$ as
\begin{equation}
\xi=\begin{bmatrix}
2.4303 & -1.4758 & 1.3285
\end{bmatrix}^\top.
\end{equation}
Now, the state trajectory generated by $(A,B)$ with initial condition $x(0)$ and input $u_{[0,7]}$ is shown in Table~\ref{table:3}. One can check that $\begin{bmatrix}
I \\ E_0^\top 
\end{bmatrix}\xi \in\ker \begin{bmatrix}
\mathcal{H}_1(u_{[0,7]}) \\
\mathcal{H}_1(x_{[0,7]}) 
\end{bmatrix}^\top$, which implies that $\rank \begin{bmatrix}
\mathcal{H}_1(u_{[0,7]}) \\
\mathcal{H}_1(x_{[0,7]}) 
\end{bmatrix}<5$. 
\end{example}
\begin{table}[h]
    \centering
    \caption{Input signal for Example~\ref{ex:sec:V-2}.}
    \begin{tabular}{c|cccccccc}
       $t$  & 0 & 1 & 2 & 3 & 4 & 5 & 6 & 7 \\ \hline
       \multirow{2}{*}{\!$u(t)$\!} & -0.46 & -1.09 & 0.9  & 1.38 & 1.24  & 1.54 & 0.22 & -1.68 \\
              & 1.86  &  -0.87  & -1.42 & 1.06 & 1.6 & -1.94 & 1.11 & -1.03
    \end{tabular}
    \label{table:2}
\end{table}

\begin{table*}[h]
    \centering
    \caption{State trajectory for Example~\ref{ex:sec:V-2}.}
    \begin{tabular}{c|cccccccc}
       $t$  & 0 & 1 & 2 & 3 & 4 & 5 & 6 & 7 \\ \hline
       \multirow{3}{*}{$x(t)$} & -0.4610 & -1.2243  &  0.6834  &  1.1065 & -0.4624 &   0.7518  &  0.4177  &  1.0417 \\
              & -0.3879 &  -1.4824  &  0.7041 &   1.3940 & -0.3905  &  1.2941 &  -0.0876  &  2.2183 \\
              & 0.0665  &  0.6780 & -0.1718 &  -0.5763 &   0.2108  &  0.4853 &  -1.0387  &  0.6362
    \end{tabular}
    \label{table:3}
\end{table*}

\begin{example}
\label{ex:sec:V-1}
For the sake of illustration, we consider a single-state multi-input system with $n=1$ and $m=2$, as
\begin{equation}
x(t+1)=ax(t)+\begin{bmatrix}
b_1 & b_2
\end{bmatrix}u(t).
\end{equation}
Let $T=7$, $L=2$, and the input signal be given in Table~\ref{table:1}. This input is not persistently exciting of order $3$. Fig.~\ref{fig:1} shows a set of $10000$ systems that are capable of generating rank-deficient data for such an input. For instance, the trajectory generated by system $a=-0.9262$, $b_1=-0.3273$, and \linebreak $b_2=-0.3356$, indicated by the red dot in Fig.~\ref{fig:1}, starting from $x(0)= 0.5561$, is such that $\rank\begin{bmatrix}
\mathcal{H}_2(u_{[0,6]}) \\
\mathcal{H}_1(x_{[0,5]}) 
\end{bmatrix}=4$. 
\end{example}

\begin{table}[h]
    \centering
    \caption{Input signal for Example~\ref{ex:sec:V-1}.}
    \begin{tabular}{c|ccccccc}
       $t$  & 0 & 1 & 2 & 3 & 4 & 5 & 6 \\ \hline
       \multirow{2}{*}{$u(t)$} & -1.24 & 0.35 & -0.56 & 1.24 & -1.66 & 0.61 & -0.39 \\
        & 0.67 & 0.7 & 0.48 & -1.92 & 1.9 & -1.08 &  -1.51
    \end{tabular}
    \label{table:1}
\end{table}

\begin{figure}[h]
    \centering
    \includegraphics[width=\linewidth]{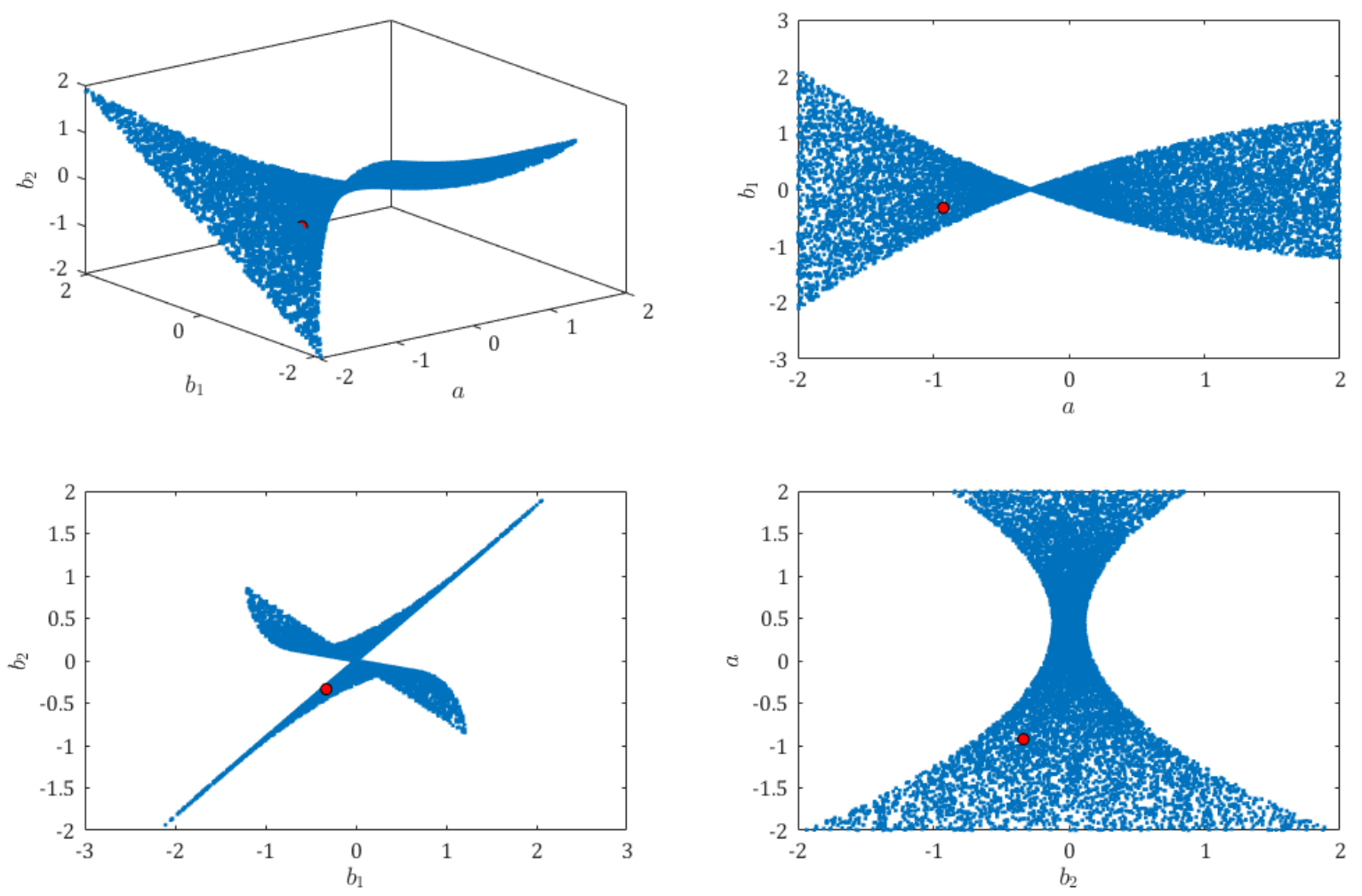}
    \caption{A set of controllable systems generating rank-deficient data for Example~\ref{ex:sec:V-1}.}
    \label{fig:1}
\end{figure}

\section{Conclusions}
\label{sec:VI}

In this work, we have defined the notion of a universal input as a signal that, when applied to any controllable system, results in input-output data that are sufficiently rich to parametrize all input-output trajectories of the system. We have proven that an input is universal if and only if it is persistently exciting of sufficiently high order. This result provides further insight into the fundamental lemma \cite{willems2005note}, from which only sufficient conditions for universality could be deduced. 

The study of universal inputs with respect to a subset of controllable systems is left for future work. In that case, universality is not necessarily equivalent to persistency of excitation. The study of universal inputs in the presence of measurement and process noise is also among the topics for future work.






\section*{References}

\bibliographystyle{IEEEtran}
\bibliography{biblo}

\end{document}